\documentclass[11pt]{amsart}
\usepackage{amsfonts}
\usepackage{graphicx}
\usepackage{amscd}
\usepackage{amsmath}
\usepackage{amssymb}
\usepackage{amsfonts}
\setcounter{MaxMatrixCols}{30}

\newtheorem{theorem}{Theorem}
\theoremstyle{plain}
\newtheorem{corollary}{Corollary}

\newtheorem{lemma}{Lemma}

\theoremstyle{definition}

\numberwithin{equation}{section}

\begin{document}
\title[Chebyshev Estimates for Generalized Primes]{Chebyshev Estimates for Beurling Generalized Prime Numbers. I}
\author[J. Vindas]{Jasson Vindas}
\address{Department of Mathematics, Ghent University, Krijgslaan 281 Gebouw S22, B 9000 Gent, Belgium}
\email{jvindas@cage.Ugent.be}
\thanks{The author gratefully acknowledges support by a Postdoctoral Fellowship of the Research
Foundation--Flanders (FWO, Belgium)}

\subjclass[2000]{Primary 11N80. Secondary 11N05, 11M41}
\keywords{Chebyshev estimates; Beurling generalized primes}

\begin{abstract}
We provide new sufficient conditions for Chebyshev estimates for Beurling generalized primes. It is shown that if the counting function $N$ of a generalized number system satisfies the $L^{1}$-condition 
$$
\int_{1}^{\infty}\left|\frac{N(x)-ax}{x}\right|\frac{\mathrm{d}x}{x}<\infty
$$
and 
$N(x)=ax+o\left(x/\log x\right),$
for some $a>0$, then
$$
0<\liminf_{x\to\infty}\frac{\psi(x)}{x}\ \ \ \mbox{and}\ \ \ \limsup_{x\to\infty}\frac{\psi(x)}{x}<\infty
$$
hold. We give an analytic proof of this result. It is based on the Wiener division theorem. Our result extends those of Diamond (Proc. Amer. Math. Soc. 39 (1973), 503--508) and Zhang (Proc. Amer. Math. Soc. 101 (1987), 205--212).
\end{abstract}
\maketitle
\section{Introduction}
This note reports on new conditions that ensure the validity of Chebyshev estimates for Beurling's generalized primes. We will considerably improve earlier results by Diamond \cite{diamond3} and Zhang \cite{zhang}. In particular, we shall answer an open question posed by Diamond in \cite[p. 10]{diamond4}.

Let $P=\left\{p_k\right\}_{k=1}^{\infty}$ be a set of Beurling generalized primes, that is, a non-decreasing sequence of real numbers tending to infinity, where it is assumed $p_1>1$. The sequence $\left\{n_{k}\right\}_{k=1}^{\infty}$ denotes its associated set of generalized integers \cite{bateman-diamond,beurling}. Set further,
\begin{equation*}
N(x)=N_{P}(x)=\sum_{n_{k}<x}1 \ \ \mbox{ and }\ \ 
\psi(x)=\psi_{P}(x)=\sum_{n_{k}<x}\Lambda(n_{k})\ ,
\end{equation*}
where $\Lambda=\Lambda_{P}$ is the von Mangoldt function of the generalized number system \cite{bateman-diamond}. 
Beurling \cite{beurling} investigated the truth of the prime number theorem (PNT) in this context, i.e.,
$$
\psi(x)\sim x\ , \ \ \ x\to\infty\ .
$$ 
He proved that 
\begin{equation}
\label{ibpneq4}
N(x)=ax+O\left(\frac{x}{\log^{\gamma}x}\right)\ , \ \ \ x\to\infty\ \ \  (a>0)\ ,
\end{equation}
where $\gamma>3/2$, suffices for the PNT to hold. His condition is sharp: When 
$\gamma=3/2$, then the PNT need not hold, as exhibited by counterexamples in \cite{beurling,diamond2}. See \cite{kahane1,vindasGPNT} for the most recent extensions of Beurling's PNT. 
Since the PNT breaks down for $\gamma\leq3/2$, a natural question arises: Under which conditions over $N$ do Chebyshev estimates hold true?
A partial answer to this question was provided by Diamond \cite{diamond3}, he showed that (\ref{ibpneq4}) with $\gamma>1$ is enough to obtain Chebyshev estimates, namely,
\begin{equation}
\label{pnbpneq5}
0<\liminf_{x\to\infty} \frac{\psi(x)}{x} \ \ \ \mbox{and} \ \ \ \limsup_{x\to\infty} \frac{\psi(x)}{x}<\infty\ .
\end{equation}
On the other hand, (\ref{pnbpneq5}) is not generally true when $\gamma<1$, as follows from an example of Hall \cite{hall}. 

Diamond conjectured \cite{diamond4} that  
\begin{equation}
\label{icepneq6}
\int_{1}^{\infty}\left|\frac{N(x)-ax}{x}\right|\frac{\mathrm{d}x}{x}<\infty\ ,  \ \ \ \mbox{with } a>0\ ,
\end{equation}
would be enough for (\ref{pnbpneq5}) to hold. His conjecture turned out to be false. Kahane established the negative answer to Diamond's conjecture in \cite{kahane2}. 

Remarkably, as shown in this article, if one adds a side condition to (\ref{icepneq6}), one can indeed achieve Chebyshev estimates. Our main goal is to prove the following theorem.

\begin{theorem}
\label{icepnth2} Diamond's $L^{1}$-condition and the asymptotic behavior
\begin{equation}
\label{icepneq7}
N(x)=ax+o\left(\frac{x}{\log x}\right)\ , \ \ \ x\to\infty\ ,
\end{equation}
suffice for Chebyshev estimates $(\ref{pnbpneq5})$.
\end{theorem}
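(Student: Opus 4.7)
My strategy is to reduce the Chebyshev estimates to the Mertens-type asymptotic
\[
\mathcal{M}(x) := \sum_{n_k < x} \Lambda(n_k)/n_k = \log x + O(1).
\]
This suffices: partial summation yields $\psi(x) = x\mathcal{M}(x) - \int_{1}^{x}\mathcal{M}(u)\,du = O(x)$, while the inequality $\mathcal{M}(cx) - \mathcal{M}(x) \leq (\psi(cx)-\psi(x))/x$, combined with the resulting telescoped lower bound $\psi(c^{n})/c^{n} \geq (\log c - M)/(c-1)$ for $c > e^{M}$, gives $\liminf \psi(y)/y > 0$.

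Toward the Mertens estimate, I first combine (i) and (ii) to show, by integration by parts on $\rho := N - a\,\mathrm{id}$, that
\[
T(x) := \int_{1}^{x} \log u \, dN(u) = ax\log x - ax + o(x);
\]
the $o(x)$ comes from $\rho(x)\log x = o(x)$ via (ii) and $\int_{1}^{x} \rho(u)/u\, du = o(x/\log x)$, both of which follow from (ii). Then the Beurling convolution identity $T(x) = \int_{1}^{x} N(x/u)\, d\psi(u)$, together with the elementary computation $\int_{1}^{x} (x/u)\, d\psi(u) = x\mathcal{M}(x)$ and the decomposition $N(x/u) = a(x/u) + \rho(x/u)$, delivers the fundamental identity
\[
\mathcal{M}(x) = \log x - 1 + o(1) - \frac{1}{ax}\int_{1}^{x} \rho(x/u)\, d\psi(u).
\]
Passing to the additive variable $y = \log x$ and setting $\tilde{\rho}(y) = \rho(e^y)e^{-y}$ (which lies in $L^{1}([0,\infty))$ by (i)), $\tilde{\mathcal{M}}(y) = \mathcal{M}(e^y)$, $F(y) = \tilde{\mathcal{M}}(y) - y$, the error term becomes a convolution and the identity turns into
\[
F(y) + a^{-1}(\tilde{\rho} \ast dF)(y) = O(1).
\]
Wiener's division theorem is then applied to invert the operator $I + a^{-1}\tilde{\rho}\ast d(\cdot)$ in a suitable Banach algebra, producing $F(y) = O(1)$ and hence the desired Mertens asymptotic.

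\textbf{Expected main obstacle.} The Wiener inversion is the critical analytic step. A direct computation shows that the Fourier symbol of $I + a^{-1}\tilde{\rho} \ast d(\cdot)$ equals $(it)\zeta(1+it)/[a(1+it)]$ (with removable singularity at $t = 0$), so a naive application would require $\zeta(1+it) \neq 0$ for all real $t \neq 0$ — essentially a PNT-strength input that is not guaranteed here, since Kahane's counterexample to Diamond's conjecture shows that zeros on $\Re s = 1$ can persist even under (i) alone. The essential role of the additional hypothesis (ii), $\rho(x) = o(x/\log x)$, must therefore be to extract refined control of $\hat{\tilde{\rho}}$ on the critical line (modulus of continuity, decay at infinity) that permits a \emph{local} or \emph{one-sided} form of Wiener's division theorem — for instance by pasting inversions over compact arcs where $\zeta$ is nonvanishing and then controlling the residual contribution through the monotonicity of $\mathcal{M}$ and the positivity of $d\psi$. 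Making this final step work rigorously, without assuming $\zeta(1+it) \neq 0$ globally, is where the main technical effort is expected to lie.
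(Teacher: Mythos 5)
Your reduction to a Mertens-type estimate and the convolution identity $F(y)+a^{-1}(\tilde{\rho}\ast dF)(y)=O(1)$ are set up correctly, and you correctly compute the symbol $it\,\zeta(1+it)/[a(1+it)]$; but the proof stops exactly where the theorem lives. The inversion step is not carried out: you acknowledge that a global Wiener inversion needs $\zeta(1+it)\neq0$ for all $t\neq0$, which is unavailable here, and you only conjecture that hypothesis (ii) supplies ``refined control of $\hat{\tilde{\rho}}$'' permitting a local or pasted inversion ``over compact arcs where $\zeta$ is nonvanishing.'' That is a genuine gap, not a routine verification: nonvanishing of $\zeta(1+it)$ on any arc away from $t=0$ is simply not guaranteed under (i)+(ii), so there is nothing to paste, and no argument is given for controlling the frequencies where the symbol may vanish. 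Moreover, your target $\mathcal{M}(x)=\log x+O(1)$ is a statement about \emph{all} frequencies of $d\tilde{\mathcal{M}}$, so any purely Fourier-analytic route to it inherently demands information about $\zeta(1+it)$ on the whole line; as it stands the plan cannot close without an additional idea.

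The paper's resolution is instructive precisely because it sidesteps this. It never aims at the Mertens asymptotic and never needs $\zeta(1+it)\neq0$ beyond a small neighborhood of $t=0$, where nonvanishing of $it\,\zeta(1+it)$ is automatic: by (i), $(s-1)\zeta(s)$ is continuous on $\Re e\,s=1$ and tends to $a>0$ at $s=1$. One tests $T(u)=e^{-u}\psi(e^{u})$ only against kernels $\hat{\phi}$ with $\phi\in\mathcal{D}(-c,c)$ band-limited to that neighborhood; the Wiener division theorem is applied \emph{locally}, to $\phi(t)/\bigl(\eta(t)\,it\,\zeta(1+it)\bigr)$ with a cutoff $\eta$, which is legitimate without any global zero-free hypothesis. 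Hypothesis (ii) enters not as Fourier-side regularity of $\hat{\tilde{\rho}}$ but in the physical variable: $E_2(u)=u\bigl(e^{-u}N(e^{u})-a\bigr)=o(1)$, so its convolution with a fixed $L^{1}$ kernel is $o(1)$, killing the problematic term. Finally, the averaged statement $\int T(u)\hat{\phi}(u-h)\,\mathrm{d}u\to\int\hat{\phi}$ is converted into the two-sided Chebyshev bounds using only the monotonicity of $\psi$ and a nonnegative kernel $\hat{\phi}\geq0$ — this positivity step is what replaces the global inversion you were hoping to perform. To salvage your plan you would either need to prove the Mertens relation by some non-Fourier mechanism, or restructure the argument along these local, positivity-driven lines.
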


Clearly, Theorem \ref{icepnth2} extends the result of Diamond quoted above. It should be noticed that Zhang \cite{zhang} also gave an extension of Diamond's theorem. Our result includes it as a particular instance:  
\begin{corollary}[Zhang \cite{zhang}]
\label{icec1} The Chebyshev estimates $(\ref{pnbpneq5})$ hold if
\begin{equation}
\label{icepneq5} \int_{1}^{\infty}\left( \sup_{x\leq t}\left|\frac{N(t)-at}{t}\right|\right)\frac{\mathrm{d}x}{x}<\infty \ \ \ (a>0)\ .
\end{equation}
\end{corollary}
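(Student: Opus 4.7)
The plan is to derive the corollary directly from Theorem \ref{icepnth2} by showing that hypothesis (\ref{icepneq5}) implies both Diamond's $L^{1}$-condition (\ref{icepneq6}) \emph{and} the asymptotic (\ref{icepneq7}), after which the theorem can be applied as a black box.

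Set $E(x):=\sup_{t\geq x}\left|\frac{N(t)-at}{t}\right|$, a non-increasing function on $[1,\infty)$; the standing assumption reads $\int_{1}^{\infty}E(x)\,\mathrm{d}x/x<\infty$. Since $\left|\frac{N(x)-ax}{x}\right|\leq E(x)$ pointwise, the $L^{1}$-condition (\ref{icepneq6}) is immediate.

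For the asymptotic (\ref{icepneq7}), observe that (\ref{icepneq7}) is equivalent to $E(x)=o(1/\log x)$, since the inequality $\left|\frac{N(x)-ax}{x}\right|\leq E(x)$ is of course tight enough on the left but here we actually need the \emph{sup}-side. Using monotonicity, for $x\geq 1$,
\[
\int_{\sqrt{x}}^{x}E(t)\,\frac{\mathrm{d}t}{t}\;\geq\; E(x)\int_{\sqrt{x}}^{x}\frac{\mathrm{d}t}{t}\;=\;E(x)\,\frac{\log x}{2}\,,
\]
and the left-hand side tends to $0$ as $x\to\infty$ because $\int_{1}^{\infty}E(t)\,\mathrm{d}t/t$ converges. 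Hence $E(x)\log x\to 0$, which is exactly (\ref{icepneq7}). Theorem \ref{icepnth2} then delivers the Chebyshev estimates (\ref{pnbpneq5}).

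There is no real obstacle here; the only point worth highlighting is the use of monotonicity of $E$ to convert $L^{1}(\mathrm{d}x/x)$-integrability into the pointwise rate $o(1/\log x)$ via the dyadic-type estimate above. Everything else is a direct comparison.
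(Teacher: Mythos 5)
Your proposal is correct and follows essentially the same route as the paper: both reduce the corollary to Theorem \ref{icepnth2} by noting that Zhang's condition trivially implies the $L^{1}$-condition (\ref{icepneq6}) and that a non-increasing, non-negative $\omega$ with $\int_{1}^{\infty}\omega(x)x^{-1}\,\mathrm{d}x<\infty$ must satisfy $\omega(x)=o(1/\log x)$, yielding (\ref{icepneq7}). Your explicit estimate over $[\sqrt{x},x]$ merely supplies the proof of this last fact, which the paper states without detail.
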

\begin{proof} Naturally, (\ref{icepneq5}) implies (\ref{icepneq6}). If $\omega$ is a non-increasing and non-negative function such that $\int_{1}^{\infty}\omega(x)x^{-1}\mathrm{d}x<\infty$, one must have $\omega(x)=o(1/\log x)$; thus, Zhang's condition (\ref{icepneq5}) always yields (\ref{icepneq7}).
\end{proof}

We shall give a proof of Theorem \ref{icepnth2} in Section \ref{cebpnproof}. We point out that the methods of Diamond and Zhang from \cite{diamond3,zhang} are elementary. Furthermore,  Diamond has asked in \cite[p. 10]{diamond4} whether it is possible to find an analytic approach to Chebyshev inequalities. Our proof of Theorem \ref{icepnth2} is non-elementary and it therefore gives an answer to Diamond's question; it uses the zeta function of the generalized number system and the Wiener division theorem \cite[Chap. 2]{korevaar}. In addition, we make use of the operational calculus for the Laplace transform of distributions \cite{vladimirov}. It would also be interesting to find an elementary proof. Finally, it should be mentioned that Zhang has provided another related sufficient condition for Chebyshev estimates in \cite{zhang1993}; the author announces that is also possible to obtain substantial improvements of that result. However, we will not pursue those questions here and they will be treated elsewhere. 
\subsection{Notation}

The Schwartz spaces $\mathcal{D}(\mathbb{R})$, $\mathcal{S}(\mathbb{R})$, $\mathcal{D}'(\mathbb{R})$ and $\mathcal{S}'(\mathbb{R})$ are well known; we refer to \cite{vladimirov} for their properties.  
If $f\in\mathcal{S}'(\mathbb{R})$ has support in $[0,\infty)$, its Laplace transform is well defined as
$$
\mathcal{L}\left\{f;s\right\}=\left\langle f(u),e^{-su}\right\rangle\ , \ \ \  \Re e\:s>0\ ,
$$
and the Fourier transform $\hat{f}$ is the distributional boundary value of $\mathcal{L}\left\{f;s\right\}$ on $\Re e\:s=0$.

We use the notation $H$ for the \emph{Heaviside function}, it is simply the characteristic function of $(0,\infty)$.

\section{Proof of Theorem \ref{icepnth2}}\label{cebpnproof}
We assume (\ref{icepneq6}) and (\ref{icepneq7}). Our starting point is the identity
\begin{equation}
\label{cepnpeq1}
\mathcal{L}\left\{\psi(e^{u});s\right\}=-\frac{\zeta'(s)}{s\zeta(s)}= \frac{1}{s}\cdot \frac{-(s-1)G'(s)}{(s-1)\zeta(s)}-\frac{G(s)}{s(s-1)\zeta(s)}-\frac{1}{s}+\frac{1}{s-1}\ ,
\end{equation}
where
$$
G(s):=\zeta(s)-\frac{a}{s-1}\ .
$$
We set $E_{1}(u):=e^{-u}N(e^{u})-aH(u)$. Our assumptions (\ref{icepneq6}) and (\ref{icepneq7}) translate into $E_{1}\in L^{1}(\mathbb{R})$ and $uE_{1}(u)=o(1)$, $u\to\infty$.
\begin{lemma}\label{cepnl1} $G(s)$ extends to a continuous function on $\Re e\:s=1$. Consequently, $(s-1)\zeta(s)$ is continuous on $\Re e\:s=1$ and there exists $c>0$ such that $it\zeta(1+it)\neq 0$ for all $t\in(-3c,3c)$.
\end{lemma}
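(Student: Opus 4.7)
The plan is to express $G(s)$ as a Laplace transform of an $L^1$ function so that continuity on the critical line is immediate, and then read off the non-vanishing from the residue at $s=1$.

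First, I would start from the Stieltjes integral representation of the Beurling zeta function. For $\operatorname{Re} s>1$ one has $\zeta(s)=s\int_{1}^{\infty}N(x)x^{-s-1}\mathrm{d}x$, and splitting $N(x)=ax+(N(x)-ax)$ gives $as/(s-1)$ from the main term. Absorbing the extra $a$ into the difference yields
$$
G(s)=a+s\int_{1}^{\infty}\frac{N(x)-ax}{x}\cdot x^{-s}\,\frac{\mathrm{d}x}{x}.
$$
After the substitution $x=e^{u}$ this becomes $G(s)=a+s\,\mathcal{L}\{E_{1};s-1\}$, where $E_{1}(u)=e^{-u}N(e^{u})-aH(u)$ is the very function singled out in the introduction to Section \ref{cebpnproof}. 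A short check confirms $E_{1}$ is supported in $[0,\infty)$ (because $N(x)=0$ for $x\leq 1$) and that Diamond's $L^{1}$-condition (\ref{icepneq6}) is exactly the statement $E_{1}\in L^{1}(\mathbb{R})$.

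Next I would invoke the standard boundary-continuity of the Laplace transform of an $L^{1}$ function: since $|E_{1}(u)e^{-(s-1)u}|\leq|E_{1}(u)|$ for $\operatorname{Re} s\geq 1$, dominated convergence shows that $s\mapsto \mathcal{L}\{E_{1};s-1\}$ extends continuously from $\operatorname{Re} s>1$ to $\operatorname{Re} s\geq 1$ (the boundary values on $\operatorname{Re} s=1$ coincide with the Fourier transform of $E_{1}$). Hence $G$ extends continuously to $\operatorname{Re} s=1$, and consequently $(s-1)\zeta(s)=(s-1)G(s)+a$ does as well.

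Finally, for the non-vanishing of $it\,\zeta(1+it)$ in a neighborhood of the origin, I would simply note that the function $t\mapsto it\,\zeta(1+it)$ is precisely the restriction of the continuous extension of $(s-1)\zeta(s)$ to the line $\operatorname{Re} s=1$, and its value at $t=0$ equals $a>0$. Continuity at $t=0$ then produces a $c>0$ with $|it\,\zeta(1+it)|>a/2$, say, throughout $(-3c,3c)$.

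There is no real obstacle in this argument; the only small point that needs attention is justifying the continuous extension of $\mathcal{L}\{E_{1};s-1\}$ up to $\operatorname{Re} s=1$, which is immediate from $E_{1}\in L^{1}$ and does not require the auxiliary condition $uE_{1}(u)=o(1)$ (that hypothesis will presumably be used later in the proof of Theorem \ref{icepnth2}, not here).
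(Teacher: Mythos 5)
Your proposal is correct and follows essentially the same route as the paper: both rest on the identity $G(s)=a+s\,\mathcal{L}\{E_{1};s-1\}$ together with $E_{1}\in L^{1}(\mathbb{R})$, the paper phrasing the boundary continuity via the (continuous) Fourier transform $\hat{E}_{1}$ and you via dominated convergence, which amounts to the same thing. Your observation that the non-vanishing of $it\,\zeta(1+it)$ near $t=0$ comes from the value $a>0$ at $t=0$, and that the condition $uE_{1}(u)=o(1)$ is not needed here, matches the paper's argument.
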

\begin{proof} Clearly, 
\begin{equation}
\label{cepnpeq0}
G(s)= s\mathcal{L}\left\{E_{1};s-1\right\}+a\ .
\end{equation}
Taking distributional boundary values, we obtain $G(1+it)=(1+it)\hat{E}_{1}(t)+a$. Since $E_1\in L^{1}(\mathbb{R})$, $\hat{E}_{1}$ is continuous and the assertions follow at once. 
\end{proof}
Set $T(u)=e^{-u}\psi(e^{u})$, we must show that
\begin{equation}
\label{cepnpeq2}
0<\liminf_{u\to\infty}T(u) \ \ \ \mbox{and} \ \ \ \limsup_{u\to\infty}T(u)<\infty\ . 
\end{equation}
The next step is to use the boundary behavior of (\ref{cepnpeq1}) near $s=1$ to derive (\ref{cepnpeq2}). We first study convolution averages of $T$.

\begin{lemma}
\label{cepnl2} For any fixed $\phi\in \mathcal{D}(-c,c)$,
\begin{equation}
\label{cepnpeq3}
\int_{-\infty}^{\infty}T(u)\hat{\phi}(u-h)\mathrm{d}u=\int_{-\infty}^{\infty}\hat{\phi}(u)\mathrm{d}u+o(1)\ , \ \ \ h\to\infty\ .
\end{equation}
\end{lemma}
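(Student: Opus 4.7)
The approach is a Parseval/Riemann--Lebesgue argument. Write $T = H + (T-H)$. The Heaviside contribution yields the main term,
\[
\int_{-\infty}^{\infty}H(u)\hat\phi(u-h)\,\mathrm{d}u \;=\; \int_{-h}^{\infty}\hat\phi(v)\,\mathrm{d}v \;\longrightarrow\; \int_{-\infty}^{\infty}\hat\phi(v)\,\mathrm{d}v \qquad(h\to\infty),
\]
by dominated convergence since $\hat\phi \in \mathcal{S}(\mathbb{R})$. It therefore suffices to show $\int(T-H)(u)\hat\phi(u-h)\,\mathrm{d}u = o(1)$.

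The trivial bound $\psi(x)\leq(\log x)N(x)=O(x\log x)$ gives $T(u)=O(u)$, so $T-H \in \mathcal{S}'(\mathbb{R})$, and by Parseval for tempered distributions,
\[
\int_{-\infty}^{\infty}(T-H)(u)\hat\phi(u-h)\,\mathrm{d}u \;=\; \langle\widehat{T-H}(t),\phi(t)e^{iht}\rangle,
\]
where $\widehat{T-H}(t)$ is the distributional boundary value on $\Re w=0$ of $\mathcal{L}\{T-H;w\}$. Substituting $s=1+w$ in \eqref{cepnpeq1} and subtracting the polar term $1/w$ (whose boundary value accounts for $\hat H$) gives
\[
\mathcal{L}\{T-H;w\} \;=\; \frac{-G'(1+w)}{(1+w)\zeta(1+w)} \;-\; \frac{G(1+w)}{(1+w)w\zeta(1+w)} \;-\; \frac{1}{1+w}.
\]
The third summand has smooth boundary value $-1/(1+it)$, and the second has boundary value continuous on $(-3c,3c)$ by Lemma \ref{cepnl1}. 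Since $\operatorname{supp}\phi \subset (-c,c)\subset(-3c,3c)$, multiplication by $\phi$ produces continuous compactly supported (hence $L^1$) functions, and the Riemann--Lebesgue lemma dispatches both contributions as $o(1)$.

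The essential difficulty is the first summand, because $G'(1+w)$ is not known to admit a continuous boundary value. I would introduce $\Phi(w):=a+w\mathcal{L}\{E_1;w\}$, so that $(s-1)\zeta(s)|_{s=1+w}=(1+w)\Phi(w)$, noting that $\Phi(it)$ is continuous and non-vanishing on $(-3c,3c)$ by Lemma \ref{cepnl1}. A direct calculation based on the Laplace-transform representation of $G$ yields the algebraic identity $wG'(1+w) = (1+w)\Phi'(w) - \mathcal{L}\{E_1;w\}$, giving the decomposition
\[
\frac{-G'(1+w)}{(1+w)\zeta(1+w)} \;=\; \frac{\mathcal{L}\{E_1;w\}}{(1+w)^2\Phi(w)} \;-\; \frac{\Phi'(w)}{(1+w)\Phi(w)}.
\]
The first piece has continuous boundary value on $(-3c,3c)$ (since $\hat E_1$ is continuous and $\Phi(it)\neq 0$) and is handled as before. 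The remaining logarithmic-derivative piece is the technical heart: invoking the Wiener division theorem, the non-vanishing of $\Phi(it)$ on $(-3c,3c)$ permits one to realize $\phi(t)/\Phi(it)$ as the Fourier transform of an $L^1$ function; an integration by parts interpreting $\Phi'(it)$ as the distributional derivative $-i(\mathrm{d}/\mathrm{d}t)\Phi(it)$ then reduces the pairing to integrals controlled by Riemann--Lebesgue, the needed regularity on $\hat E_1$ being supplied by the decay $uE_1(u)=o(1)$ that comes from \eqref{icepneq7}. This final step is where the extra hypothesis beyond \eqref{icepneq6} is indispensable, consistent with Kahane's counterexample that \eqref{icepneq6} alone does not suffice.
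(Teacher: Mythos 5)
Your algebra is sound and your overall strategy (boundary values, Riemann--Lebesgue for the continuous pieces, Wiener division for the singular piece) is the same as the paper's: the identities $(s-1)\zeta(s)=(1+w)\Phi(w)$ and $wG'(1+w)=(1+w)\Phi'(w)-\mathcal{L}\{E_1;w\}$ are correct, and the summands whose boundary values are continuous on $(-3c,3c)$ are correctly dispatched, exactly as the paper handles its $g_2$. The genuine gap is in your last step, which is the one place where hypothesis \eqref{icepneq7} must do work. Write $E_2(u)=uE_1(u)$; then $\Phi'(w)=\mathcal{L}\{E_1;w\}-\mathcal{L}\{E_2';w\}$, so after removing another continuous piece your ``logarithmic-derivative'' term is the boundary value of $\mathcal{L}\{E_2';w\}/((1+w)\Phi(w))$. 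Since \eqref{icepneq7} only gives $E_2(u)=o(1)$, $E_2$ need not belong to $L^1$; hence $\hat{E}_2$, and with it $\Phi'(it)$, is merely a tempered distribution, not a continuous function. So the decay $uE_1(u)=o(1)$ supplies no ``regularity on $\hat E_1$'', and Riemann--Lebesgue is simply unavailable for this piece. Your integration by parts does not repair this: after dividing by $\Phi(it)$ via Wiener division, the pairing $\left\langle \Phi'(it), e^{iht}\phi(t)/((1+it)\Phi(it))\right\rangle$ is not even a priori well defined, because $\hat f(t)=\phi(t)/\Phi(it)$ and $1/\Phi(it)$ are only continuous (not $C^1$, precisely because $\Phi'(it)$ is not a function); if instead you integrate by parts before dividing, the term where $\mathrm{d}/\mathrm{d}t$ hits $1/\Phi(it)$ is undefined and formally just reproduces the original pairing, so nothing is gained. (A smaller glossed-over point: to invoke Wiener division for $\phi/\Phi(it)$ you must know $\Phi(it)=a+it\hat E_1(t)$ is locally in the Wiener algebra, which itself requires a cutoff argument using $E_1\in L^1$.)

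The correct completion---and what the paper does---is to exploit $E_2(u)=o(1)$ on the $u$-side rather than through Fourier analysis in $t$. One writes $\mathcal{L}\{E_2';w\}/((1+w)\Phi(w))=\mathcal{L}\{E_2'\ast M;w\}$, where $\mathcal{L}\{M;w\}=1/((s-1)\zeta(s))$, takes an even cutoff $\eta\in\mathcal{D}(-3c,3c)$ with $\eta\equiv1$ on $(-2c,2c)$, notes that $\eta(t)\,it\,\zeta(1+it)=\eta(t)(1+it)\Phi(it)$ lies in $\mathcal{F}L^1$ (this uses $E_1\in L^1$), and applies Wiener division to get $f\in L^1$ with $\phi(t)=\hat f(t)\,\eta(t)\,it\,\zeta(1+it)$. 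The pairing then becomes the convolution $(E_2\ast(\hat\eta)'\ast f)(h)$: the derivative carried by $E_2'$ is shifted onto the smooth kernel $\hat\eta$, not onto the exponential, and the result is $o(1)$ because $E_2$ is bounded, supported in $[0,\infty)$, and tends to $0$, while $(\hat\eta)'\ast f\in L^1$. In short, your decomposition through $\Phi$ is a harmless variant of the paper's, but your closing mechanism (integration by parts plus Riemann--Lebesgue) fails exactly at the term that distinguishes this theorem from Diamond's conjecture, and it must be replaced by the convolution argument just described.
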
 
\begin{proof}
Fix $\phi\in\mathcal{D}(-c,c)$. We use (\ref{cepnpeq0}) to decompose (\ref{cepnpeq1}) further, 
\begin{equation}
\label{cepnpeq5}
-\frac{\zeta'(s)}{s\zeta(s)}= \frac{(s-1)\mathcal{L}\left\{uE_{1}(u);s-1\right\}}{(s-1)\zeta(s)}-\frac{\mathcal{L}\left\{E_{1};s-1\right\}}{s\zeta(s)}-\frac{G(s)}{s(s-1)\zeta(s)}-\frac{1}{s}+\frac{1}{s-1}\ .
\end{equation}
Set now
$$
g_{1}(t):=\lim _{\sigma\to1^{+}}\frac{(\sigma-1+it)\mathcal{L}\left\{uE_{1}(u);\sigma-1+it\right\}}{(\sigma-1+it)\zeta(\sigma+it)} \ \ \ \mbox{in }\mathcal{S}'(\mathbb{R})\ , 
$$ 
and 
$$
g_{2}(t):=-\lim _{\sigma\to1^{+}}\left(\frac{(\sigma-1+it)\mathcal{L}\left\{E_{1};\sigma-1+it\right\}+G(\sigma+it)}{(\sigma+it)(\sigma-1+it)\zeta(\sigma+it)}+\frac{1}{\sigma+it}\right)
$$
in $\mathcal{S}'(\mathbb{R})$. Taking boundary values in (\ref{cepnpeq5}), 
we obtain
$
\hat{T}(t)=g_{1}(t)+ g_{2}(t)+\hat{H}(t),
$
an equality that must be interpreted in the sense of distributions. Recall that $H$ is the Heaviside function. By Lemma \ref{cepnl1}, $g_{2}$ is continuous on $(-3c,3c)$. Next, applying the Riemann-Lebesgue lemma to the continuous function $\phi (t)g_{2}(t)$, we conclude that
\begin{align*}
\int_{-\infty}^{\infty}T(u)\hat{\phi}(u-h)\mathrm{d}u&=\left\langle \hat{T}(t),e^{iht}\phi(t)\right\rangle
\\
&
=\int_{-\infty}^{\infty}\hat{\phi}(u)\mathrm{d}u+ \left\langle g_{1}(t),e^{iht}\phi(t)\right\rangle+o(1)\ .
\end{align*}
Thus, it is enough to show that
$$
\lim_{h\to\infty} \left\langle g_{1}(t),e^{iht}\phi(t)\right\rangle=0\ .
$$
Let $M\in\mathcal{S}'(\mathbb{R})$ be the distribution supported in the interval $[0,\infty)$ that satisfies $\mathcal{L}\left\{M;s-1\right\}=((s-1)\zeta(s))^{-1}$. Notice also that $(s-1)\mathcal{L}\left\{E_{2};s-1\right\}=\mathcal{L}\left\{E_{2}';s-1\right\}$, where $E_{2}(u)=uE_1(u)=o(1)$, so we have that $g_{1}=\widehat{( E_{2}'\ast M)}$, where $\ast$ denotes convolution \cite{vladimirov}. Consider an even function $\eta\in \mathcal{D}(-3c,3c)$ such that $\eta(t)=1$ for all $t\in(-2c,2c)$. 
Clearly $\eta(t)it\zeta(1+it)\neq0$ for all $t\in(-2c,2c)$; moreover, it is the Fourier transform of an $L^{1}$-function. 
Finally, we apply the Wiener division theorem \cite[p. 88]{korevaar} to $\eta(t)it\zeta(1+it)$ and $\phi(t)$ and conclude the existence of $f\in L^{1}(\mathbb{R})$ such that

$$
\hat{f}(t)=\frac{\phi(t)}{\eta(t) it \zeta(1+it)}\ .
$$
Therefore, as $h\to\infty$,
$$\left\langle g_{1}(t),e^{iht}\phi(t)\right\rangle=\left\langle (E_{2}'\ast M)(u),\hat{\phi}(u-h)\right\rangle=(E_{2}\ast (\hat{\eta})'\ast f)(h)=o(1)\ , $$
because $E_{2}(u)=o(1)$ and $(\hat{\eta})'\ast f\in L^{1}(\mathbb{R})$. Thus, (\ref{cepnpeq3}) has been established. 
\end{proof}

The estimates (\ref{cepnpeq2}) follow now easily from (\ref{cepnpeq3}) in Lemma \ref{cepnl2}. Choose $\phi\in\mathcal{D}(-c,c)$ in (\ref{cepnpeq3}) such that $\hat{\phi}$ is non-negative.
Using the fact that $\psi(e^{u})$ is non-decreasing, we have that $e^{-u}T(h)\leq T(u+h)$ whenever $u$ and $h$ are positive, setting $C_1=\int_{0}^{\infty}e^{-u}\hat{\phi}(u)\mathrm{d}u>0,$
\begin{equation*}T(h)= 
C_1^{-1}\int_{0}^{\infty}e^{-u}T(h)\hat{\phi}(u)\mathrm{d}u\leq C_1^{-1}\int_{0}^{\infty}T(u+h)\hat{\phi}(u)\mathrm{d}u=O(1)\leq C_{2}\ ,
\end{equation*}
for some constant $C_{2}>0$. Fix now $A>0$; observe that if $u\leq A$, then $T(h)\geq e^{u-A}T(h-A+u)$, and hence
\begin{align*}
\liminf_{h\to\infty} T(h)&\geq \frac{e^{-A}}{\int_{-A}^{A}e^{-u}\hat{\phi}(u)\mathrm{d}u}\liminf_{h\to\infty}\int_{-A}^{A}T(h-A+u)\hat{\phi}(u)\mathrm{d}u\\
&
=\frac{e^{-A}}{\int_{-A}^{A}e^{-u}\hat{\phi}(u)\mathrm{d}u}\liminf_{h\to\infty}\left(\int_{-\infty}^{\infty}-\int_{\left|u\right|\geq A}\right)T(h-A+u)\hat{\phi}(u)\mathrm{d}u
\\
&
\geq 
\frac{e^{-A}}{\int_{-A}^{A}e^{-u}\hat{\phi}(u)\mathrm{d}u}\left(\int_{-\infty}^{\infty}\hat{\phi}(u)\mathrm{d}u-C_2\int_{\left|u\right|\geq A}\hat{\phi}(u)\mathrm{d}u\right)\ .
\end{align*}
It remains to choose $A$ so large that $\int_{-\infty}^{\infty}\hat{\phi}(u)\mathrm{d}u-C_2\int_{\left|u\right|\geq A}\hat{\phi}(u)\mathrm{d}u>0$. The proof is complete.

\end{document}